\documentclass[a4paper,12pt]{article}
\usepackage{times,url}
\textheight 24.5cm
\textwidth 16.3cm
\oddsidemargin 0.in
\evensidemargin 0.in
\topmargin -1.8cm

\usepackage{amsmath,amssymb,amsthm,amsfonts}

\newtheorem{theorem}{Theorem}[section]
\newtheorem{lemma}{Lemma}[section]

\newtheorem{remark}{Remark}[section]

\begin{document}
\setcounter{page}{1} 
\vspace{10mm}

\begin{center}
{\LARGE \bf  Conditions Equivalent to the \\
Descartes-Frenicle-Sorli Conjecture \\
on Odd Perfect Numbers}
\vspace{8mm}

{\large \bf Jose Arnaldo B. Dris}
\vspace{3mm}

Department of Mathematics and Physics, Far Eastern University \\ 
Nicanor Reyes Street, Sampaloc, Manila, Philippines \\
e-mail: \url{jadris@feu.edu.ph}
\vspace{2mm}

\end{center}
\vspace{10mm}

\noindent
{\bf Abstract:} The Descartes-Frenicle-Sorli conjecture predicts that $k=1$ if $q^k n^2$ is an odd perfect number with Euler prime $q$.  In this note, we present some conditions equivalent to this conjecture. \\
{\bf Keywords:} Odd perfect number, abundancy index, deficiency. \\
{\bf AMS Classification:} 11A25.
\vspace{10mm}

\section{Introduction}
If $N$ is a positive integer, then we write $\sigma(N)$ for the sum of the divisors of $N$.  A number $N$ is \emph{perfect} if $\sigma(N)=2N$.  We denote the abundancy index $I$ of the positive integer $w$ as $I(w) = \displaystyle\frac{\sigma(w)}{w}$.  We also denote the deficiency $D$ of the positive integer $x$ as $D(x) = 2x - \sigma(x)$ \cite{OEIS-A033879}.

Euclid and Euler showed that that an even perfect number $E$ must have the form
$$E=\left(2^p - 1\right){2^{p-1}}$$
where $2^p - 1$ is a \emph{Mersenne prime}.  On the other hand, Euler showed that an odd perfect number $O$ must have the form
$$O=q^k n^2$$
where $q$ is an \emph{Euler prime} (i.e., $q \equiv k \equiv 1 \pmod 4$ and $\gcd(q,n)=1$).

It is currently unknown whether there are any odd perfect numbers.  On the other hand, only $49$ even perfect numbers have been found, a couple of which were discovered by the Great Internet Mersenne Prime Search \cite{GIMPS}.  It is conjectured that there are infinitely many even perfect numbers, and that there are no odd perfect numbers.

Descartes, Frenicle and subsequently Sorli conjectured that $k=1$ \cite{Beasley}.  Sorli conjectured $k=1$ after testing large numbers with eight distinct prime factors for perfection \cite{Sorli}.

Holdener presented some conditions equivalent to the existence of odd perfect numbers in \cite{Holdener}.  In this paper, we prove the following results:

\begin{lemma}
\label{lem:1}
If $N=q^k n^2$ is an odd perfect number with Euler prime $q$, then $k=1$ if and only if
$$\frac{\sigma(n^2)}{q} \mid n^2.$$
\end{lemma}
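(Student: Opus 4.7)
The plan is to use the perfection equation $\sigma(q^k)\sigma(n^2) = 2q^k n^2$ as the common hinge for both directions. The first observation is that $\sigma(q^k) = 1 + q + q^2 + \cdots + q^k \equiv 1 \pmod{q}$, so $\gcd(q, \sigma(q^k)) = 1$. Combined with $\gcd(q^k, \sigma(q^k)) = 1$, this forces $q^k \mid \sigma(n^2)$. I will therefore write $\sigma(n^2) = q^k t$ for some positive integer $t$, which reduces the perfection equation to the cleaner identity $\sigma(q^k)\, t = 2n^2$. In these terms, the hypothesis $\sigma(n^2)/q \mid n^2$ becomes simply $q^{k-1} t \mid n^2$.

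For the direction ($\Rightarrow$), if $k = 1$ then the reduced identity reads $(q+1)t = 2n^2$, so $n^2/t = (q+1)/2$. Because $q \equiv 1 \pmod{4}$, the quantity $(q+1)/2$ is a positive integer, which establishes $t = \sigma(n^2)/q \mid n^2$.

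For the direction ($\Leftarrow$), I will assume $q^{k-1} t \mid n^2$ and write $n^2 = q^{k-1} t u$ for some positive integer $u$. Plugging this into $\sigma(q^k)\, t = 2n^2$ and canceling $t$ yields $\sigma(q^k) = 2 q^{k-1} u$. If $k \geq 2$, then $q$ divides the right-hand side but is coprime to $\sigma(q^k)$, which is a contradiction. Hence $k = 1$.

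The arithmetic content is entirely routine once the variable $t$ is introduced; the only real care-point is making the coprimality $\gcd(q, \sigma(q^k)) = 1$ explicit so that both the factorization $\sigma(n^2) = q^k t$ and the final contradiction in the reverse direction are justified. Everything else is symbolic manipulation of the perfection equation.
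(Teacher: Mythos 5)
Your proof is correct, and it takes a genuinely different (and somewhat leaner) route than the paper's. The paper first establishes an auxiliary identity (its Lemma \ref{lem:4}) and rewrites it as $D(n^2) = \frac{\sigma(n^2)}{q}\cdot I(q^{k-1})$; the reverse direction then deduces that $I(q^{k-1})$ is an integer and invokes the bound $1 \leq I(q^{k-1}) < 5/4$ to force $k=1$, while the forward direction uses that $\sigma(n^2)/q$ divides both $\sigma(n^2)$ and $D(n^2)=2n^2-\sigma(n^2)$, hence divides $2n^2$, hence divides $n^2$ by oddness. You instead work directly with the perfection equation: after the standard factorization $\sigma(n^2) = q^k t$ you reduce everything to $\sigma(q^k)\,t = 2n^2$, so the forward direction is the explicit integrality of $n^2/t = (q+1)/2$ (for which $q$ odd already suffices; you do not really need $q \equiv 1 \pmod 4$), and the reverse direction becomes the clean contradiction $q \mid 2q^{k-1}u = \sigma(q^k)$ against $\gcd(q,\sigma(q^k))=1$ when $k \geq 2$. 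What your approach buys is independence from the deficiency machinery and from the numerical bound on $I(q^{k-1})$; what the paper's approach buys is reusable infrastructure, since the same identity from Lemma \ref{lem:4} also drives its Lemma \ref{lem:3} and the concluding remarks. Both arguments are complete and elementary.
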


\begin{lemma}
\label{lem:2}
If $N=q^k n^2$ is an odd perfect number with Euler prime $q$, then
$$I(n^2) \leq 2 - \frac{5}{3q}.$$
\end{lemma}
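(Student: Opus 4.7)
The plan is to exploit the perfection equation to rewrite $I(n^2)$ in closed form as a function of $I(q^k)$, then bound $I(q^k)$ from below and finish with an elementary inequality in $q$.

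First I would use the multiplicativity of $\sigma$. Since $\gcd(q,n)=1$, the identity $\sigma(N)=2N$ gives $\sigma(q^k)\sigma(n^2)=2q^k n^2$, which rearranges to
\[
I(n^2)=\frac{\sigma(n^2)}{n^2}=\frac{2q^k}{\sigma(q^k)}=\frac{2}{I(q^k)}.
\]
So bounding $I(n^2)$ from above is the same as bounding $I(q^k)$ from below.

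Next I would exploit the series $I(q^k)=1+\tfrac{1}{q}+\tfrac{1}{q^2}+\cdots+\tfrac{1}{q^k}$, which is strictly increasing in $k$. Because $k\ge 1$, we get the clean bound
\[
I(q^k)\ge 1+\frac{1}{q}=\frac{q+1}{q},
\]
with equality precisely when $k=1$. Substituting back yields the intermediate estimate $I(n^2)\le \dfrac{2q}{q+1}=2-\dfrac{2}{q+1}$.

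Finally I would compare this intermediate bound to the target $2-\tfrac{5}{3q}$. It suffices to show $\tfrac{5}{3q}\le \tfrac{2}{q+1}$, which after clearing denominators reduces to $5(q+1)\le 6q$, i.e.\ $q\ge 5$. Since $q$ is an Euler prime with $q\equiv 1\pmod 4$, we indeed have $q\ge 5$, closing the argument. I do not anticipate a serious obstacle: the only subtle point is realizing that the natural bound $\tfrac{2q}{q+1}$ is in fact \emph{sharper} than the stated $2-\tfrac{5}{3q}$ only because $q\ge 5$, and the inequality becomes equality in the extremal case $q=5$, $k=1$ (which is consistent with the Descartes-Frenicle-Sorli conjecture).
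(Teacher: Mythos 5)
Your proof is correct and follows essentially the same route as the paper: both rest on the identity $I(n^2)=2/I(q^k)$, use $k\geq 1$ to reduce to the extremal case $k=1$, and finish with a one-variable inequality in $q$ that holds precisely because $q\geq 5$. The only difference is cosmetic — you argue directly by bounding $I(q^k)\geq (q+1)/q$ first, whereas the paper argues by contradiction via the factorization $q^2-6q+5=(q-5)(q-1)$.
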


\begin{lemma}
\label{lem:3}
If $N=q^k n^2$ is an odd perfect number with Euler prime $q$, then $k=1$ if and only if
$$D(n^2) \mid n^2.$$
\end{lemma}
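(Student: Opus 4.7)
The plan is to derive an explicit expression for $D(n^2)$ using the perfection of $N = q^k n^2$, and then reduce the divisibility condition $D(n^2) \mid n^2$ to a simple statement about the $q$-adic sums $\sigma(q^{k-1})$ and $\sigma(q^k)$.

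First, from the multiplicativity of $\sigma$ together with $\gcd(q,n)=1$ and perfection, I would write $\sigma(q^k)\sigma(n^2) = 2q^k n^2$, solve for $\sigma(n^2)$, and substitute into $D(n^2) = 2n^2 - \sigma(n^2)$. Using the identity $\sigma(q^k) - q^k = \sigma(q^{k-1})$, this algebraic manipulation should yield
\[
D(n^2) \;=\; \frac{2n^2\,\sigma(q^{k-1})}{\sigma(q^k)}.
\]
A quick check that this is a positive integer follows from the perfection equation: since $q \nmid \sigma(q^k)$, one has $q^k \mid \sigma(n^2)$, so $\sigma(q^k) \mid 2n^2$.

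The key step is now to invert the formula and observe
\[
\frac{n^2}{D(n^2)} \;=\; \frac{\sigma(q^k)}{2\,\sigma(q^{k-1})},
\]
so $D(n^2) \mid n^2$ if and only if $2\,\sigma(q^{k-1}) \mid \sigma(q^k)$. Writing $\sigma(q^k) = \sigma(q^{k-1}) + q^k$ and using $\gcd(\sigma(q^{k-1}), q) = 1$, the divisibility $\sigma(q^{k-1}) \mid \sigma(q^k)$ collapses to $\sigma(q^{k-1}) \mid q^k$, which forces $\sigma(q^{k-1}) = 1$ and hence $k = 1$. Conversely, when $k = 1$ the extra factor of $2$ is absorbed because $\sigma(q) = q+1$ is even ($q$ being odd), so $2 \mid \sigma(q)$. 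This gives both directions of the equivalence.

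I do not anticipate a significant obstacle beyond the bookkeeping. The entire argument hinges on the identity $\sigma(q^k) - q^k = \sigma(q^{k-1})$, which lets the expression for $D(n^2)$ collapse into a clean ratio, and on the elementary fact $\gcd(\sigma(q^{k-1}), q) = 1$. The slight subtlety to be careful about is the presence of the factor $2$ in the denominator, which behaves differently in the $k = 1$ case (where $q+1$ is already even) and the $k > 1$ case (where the obstruction comes from $\sigma(q^{k-1})$ itself, not from parity).
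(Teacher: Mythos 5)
Your proposal is correct and follows essentially the same route as the paper: both rest on the identity $D(n^2)\,\sigma(q^k) = 2n^2\,\sigma(q^{k-1})$ (the paper gets it from its Lemma 2.1) and reduce the divisibility $D(n^2) \mid n^2$ to the integrality of $\sigma(q^k)/\bigl(2\sigma(q^{k-1})\bigr)$, which forces $\sigma(q^{k-1})=1$ via the coprimality of $\sigma(q^{k-1})$ with $q$. The only cosmetic difference is in the easy direction, where the paper absorbs the factor $2$ using the oddness of $D(n^2)$ while you use the evenness of $q+1$; both are valid.
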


\begin{theorem}
\label{thm:1}
If $N=q^k n^2$ is an odd perfect number with Euler prime $q$, then
$$I(n^2) = 2 - \frac{5}{3q}$$
holds if and only if $k=1$ and $q=5$.
\end{theorem}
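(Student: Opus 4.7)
The plan is to exploit the multiplicativity of $\sigma$ on the coprime factors $q^k$ and $n^2$ of the odd perfect number $N$. From $\sigma(N) = 2N$ one obtains $\sigma(q^k)\sigma(n^2) = 2q^k n^2$, or equivalently the identity
$$I(q^k)\,I(n^2) = 2, \qquad \text{so that} \qquad I(n^2) = \frac{2q^k(q-1)}{q^{k+1}-1}.$$
Both directions of the theorem will flow from this single closed-form expression for $I(n^2)$.

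For the $(\Leftarrow)$ direction, I would substitute $q=5$ and $k=1$ into the formula, obtaining $I(n^2) = (2\cdot 5\cdot 4)/24 = 5/3$, and verify that $2 - 5/(3q)$ also equals $5/3$ at $q=5$. This is only a numerical check.

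For the $(\Rightarrow)$ direction, I would equate $\tfrac{2q^k(q-1)}{q^{k+1}-1}$ to $\tfrac{6q-5}{3q}$, cross-multiply, and collect terms. I expect the $q^{k+2}$ contributions to cancel on both sides, leaving the clean Diophantine condition
$$q^{k+1} = 6q - 5, \qquad \text{equivalently} \qquad q^k = 6 - \tfrac{5}{q}.$$
Since $q$ is an Euler prime, $q \geq 5$, so the right-hand side is strictly less than $6$; yet the prime power $q^k$ on the left is at least $5$. Thus $q^k = 5$, forcing $q=5$ and $k=1$, as required. (Alternatively, one could appeal directly to Lemma~\ref{lem:2} and trace the equality cases of the two inequalities $I(q^k)\ge I(q)$ and $(q+1)/q \ge 6q/(6q-5)$, which give $k=1$ and $q=5$ respectively.)

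No step here presents a genuine obstacle: the argument is almost entirely algebraic once the identity $I(q^k)I(n^2) = 2$ is recorded. The only delicate point is to carry out the cross-multiplication carefully so that the intended cancellation produces the succinct equation $q^{k+1} = 6q-5$, which is what makes the final conclusion immediate.
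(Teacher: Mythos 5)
Your proposal is correct and follows essentially the same route as the paper: both use $I(n^2) = 2/I(q^k) = \frac{2q^k(q-1)}{q^{k+1}-1}$ and reduce the hypothesized equality to $q^{k+1} - 6q + 5 = 0$ (your cross-multiplication does produce the expected cancellation of the $q^{k+2}$ terms), with the reverse direction being the same numerical check. The only difference is the endgame: the paper invokes $k \equiv 1 \pmod 4$ to rule out $k>1$ via $q^6 - 6q + 5 > 0$, whereas your observation that $q^k = 6 - \tfrac{5}{q} \in [5,6)$ forces $q^k = 5$ directly, which is marginally cleaner since it uses only $q \geq 5$.
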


All of the proofs given in this note are elementary.

\section{Preliminaries}
Let $N=q^k n^2$ be an odd perfect number with Euler prime $q$.

First, we show that the following equations hold.

\begin{lemma}
\label{lem:4}
If $N=q^k n^2$ is an odd perfect number with Euler prime $q$, then
$$\gcd\left(n^2, \sigma(n^2)\right) = \frac{D(n^2)}{\sigma(q^{k-1})} = \frac{\sigma(N/q^k)}{q^k}.$$
\end{lemma}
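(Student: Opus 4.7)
The plan is to exploit the perfectness equation $\sigma(N) = 2N$ together with the multiplicativity of $\sigma$ and the coprimality $\gcd(q,n) = 1$, yielding
$$\sigma(q^k)\sigma(n^2) = 2 q^k n^2.$$
The key observation is that both $q$ and $k$ are congruent to $1 \pmod 4$, so
$$\sigma(q^k) = 1 + q + q^2 + \cdots + q^k \equiv k+1 \equiv 2 \pmod 4,$$
meaning $\sigma(q^k) = 2s$ for an odd integer $s$. Substituting gives $s\,\sigma(n^2) = q^k n^2$, and since $\sigma(q^k) \equiv 1 \pmod q$ forces $\gcd(s, q) = 1$, one concludes $s \mid n^2$. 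Write $n^2 = s t$; then $\sigma(n^2) = q^k t$.

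From here the three equalities fall out. First, computing the gcd,
$$\gcd(n^2, \sigma(n^2)) = \gcd(s t,\, q^k t) = t\gcd(s, q^k) = t.$$
Second, for the deficiency,
$$D(n^2) = 2n^2 - \sigma(n^2) = 2st - q^k t = t\bigl(\sigma(q^k) - q^k\bigr) = t\,\sigma(q^{k-1}),$$
using the telescoping identity $\sigma(q^k) - q^k = 1 + q + \cdots + q^{k-1} = \sigma(q^{k-1})$. Third, since $N/q^k = n^2$, we have $\sigma(N/q^k)/q^k = \sigma(n^2)/q^k = t$. Thus all three expressions equal $t$ and the lemma follows.

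The main obstacle is purely bookkeeping around the factor of $2$: one must justify carefully that $s = \sigma(q^k)/2$ is an honest \emph{odd} integer coprime to $q$, so that the division $s \mid n^2$ is legitimate inside the odd integers. Once that congruence argument is set up cleanly, the remainder is a short algebraic manipulation centered on the telescoping identity for $\sigma(q^k)$.
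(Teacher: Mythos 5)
Your proposal is correct and follows essentially the same route as the paper: both arguments reduce to writing $\sigma(n^2) = q^k t$ and $n^2 = \bigl(\sigma(q^k)/2\bigr)\, t$ and then invoking $\gcd\bigl(q^k, \sigma(q^k)/2\bigr) = 1$ to identify $t$ as the gcd, with the identity $\sigma(q^k) - q^k = \sigma(q^{k-1})$ supplying the deficiency equality. Your explicit verification that $\sigma(q^k) \equiv 2 \pmod 4$ (so that $s = \sigma(q^k)/2$ is an odd integer) is a nice touch that the paper leaves implicit, but the underlying argument is the same.
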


\begin{proof}
Since $N=q^k n^2$ is an odd perfect number, we have
$$\sigma(q^k)\sigma(n^2) = \sigma(N) = 2N = 2{q^k}{n^2},$$
from which it follows that $q^k \mid \sigma(n^2)$ (because $\gcd\left(q^k,\sigma(q^k)\right)=1$). Hence,
$$\frac{\sigma(n^2)}{q^k} = \frac{\sigma(N/q^k)}{q^k}$$
is an integer.

First, we prove that
$$\frac{D(n^2)}{\sigma(q^{k-1})} = \frac{\sigma(N/q^k)}{q^k}.$$
We rewrite the equation
$$\sigma(q^k)\sigma(n^2) = 2{q^k}{n^2}$$
as
$$\left(q^k + \sigma(q^{k-1})\right)\sigma(n^2) = 2{q^k}{n^2}$$
$$\sigma(q^{k-1})\sigma(n^2) = {q^k}\left(2n^2 - \sigma(n^2)\right) = {q^k}\cdot{D(n^2)}$$
$$\frac{\sigma(n^2)}{q^k} = \frac{D(n^2)}{\sigma(q^{k-1})},$$
and we are done.

Next, we show that
$$\gcd\left(n^2, \sigma(n^2)\right) = \frac{D(n^2)}{\sigma(q^{k-1})}.$$
We already know that
$$\sigma(n^2) = {q^k}\cdot{\left(\frac{D(n^2)}{\sigma(q^{k-1})}\right)}.$$
Since $\sigma(q^k)\sigma(n^2) = 2{q^k}{n^2}$, we also obtain
$$\frac{2n^2}{\sigma(q^k)} = \frac{\sigma(n^2)}{q^k} = \frac{D(n^2)}{\sigma(q^{k-1})}.$$
This implies that
$$n^2 = \frac{\sigma(q^k)}{2}\cdot{\left(\frac{D(n^2)}{\sigma(q^{k-1})}\right)}.$$
It follows that
$$\gcd\left(n^2, \sigma(n^2)\right) = \frac{D(n^2)}{\sigma(q^{k-1})}$$
since
$$\gcd\left(q^k, \frac{\sigma(q^k)}{2}\right) = \gcd(q^k, \sigma(q^k)) = 1.$$
This concludes the proof.
\end{proof}

\begin{remark}
\label{rem:1}
Dris obtained the lower bound $3$ for $\sigma(N/q^k)/q^k$ in \cite{Dris1} and \cite{Dris2}.  The following papers obtain (ever-increasing) lower bounds for this quantity: \cite{Dris-Luca}, \cite{Chen-Chen1}, \cite{Broughan-Delbourgo-Zhou}, \cite{Chen-Chen2}.
\end{remark}

\begin{remark}
\label{rem:2}
Notice that
$$\frac{\sigma(n^2)}{q^k} = \frac{2n^2}{\sigma(q^k)} > \frac{8}{5}\cdot\left(\frac{n^2}{q^k}\right)$$
since $I(q^k) < 5/4$ holds unconditionally (i.e., for $k \geq 1$).  Additionally, note that
$$\frac{8}{5}\cdot\left(\frac{n^2}{q^k}\right) > \frac{8n}{5}$$
is true if $q^k < n$.

Dris conjectured in \cite{Dris1} that $q^k < n$.  Recently, Brown has announced a proof for $q<n$, and that $q^k < n$ holds ``in many cases" \cite{Brown}.
\end{remark}

\begin{remark}
\label{rem:3}
It is an easy exercise to prove that $q^k < n$ implies the biconditional        
$$q^k < n \Leftrightarrow \sigma(q^k) < \sigma(n) \Leftrightarrow \frac{\sigma(q^k)}{n} < \frac{\sigma(n)}{q^k}.$$
We refer the interested reader to MSE (\url{http://math.stackexchange.com/q/713035}) for an expository proof.
\end{remark}

Next, we prove the following lemmas.

\begin{lemma}
\label{lem:5}
If $N=q^k n^2$ is an odd perfect number with Euler prime $q$, then
$$I(n^2) \geq 2-\frac{5}{3q} \Rightarrow \left(k=1 \land q=5\right).$$
\end{lemma}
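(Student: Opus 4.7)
The plan is to convert the hypothesis, which is a lower bound on $I(n^2)$, into an equivalent upper bound on $I(q^k)$, and then argue that this upper bound is restrictive enough to pin down both $q$ and $k$.

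First I would use the defining identity $\sigma(q^k)\sigma(n^2) = 2q^k n^2$ of perfection to obtain $I(n^2) = 2/I(q^k)$. Substituting this into $I(n^2) \geq 2 - 5/(3q) = (6q-5)/(3q)$ and clearing denominators transforms the hypothesis into the cleaner inequality
$$I(q^k) \leq \frac{6q}{6q-5}.$$

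Next I would use the elementary fact that $I(q^k) = 1 + 1/q + \cdots + 1/q^k$ is strictly increasing in $k$, so $I(q^k) \geq I(q) = (q+1)/q$, with equality iff $k=1$. Chaining this with the displayed upper bound yields $(q+1)/q \leq 6q/(6q-5)$, which after cross-multiplication simplifies to $q \leq 5$. Since $q$ is an Euler prime (so $q \equiv 1 \pmod 4$, and in particular $q \geq 5$), this forces $q = 5$. Substituting $q=5$ back, the upper bound collapses to $I(5^k) \leq 6/5 = I(5)$; since $I(5^k) > I(5)$ for every $k \geq 2$, we must have $k = 1$.

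The argument is essentially two short algebraic reductions and presents no substantive obstacle. The only point requiring a moment of care is invoking the Euler-prime condition $q \equiv 1 \pmod 4$ to exclude the spurious value $q=3$, which would otherwise satisfy $q \leq 5$. Beyond the defining identity $\sigma(q^k)\sigma(n^2) = 2q^k n^2$, no earlier lemma in the paper is needed.
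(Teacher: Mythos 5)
Your proof is correct, and it rests on the same key identity as the paper's, namely $I(n^2) = 2/I(q^k)$, but the two arguments are organized differently in a way worth noting. The paper expands $2/I(q^k)$ as $2 - 2(q^k-1)/(q^{k+1}-1)$ and reduces the hypothesis to the single polynomial inequality $q^{k+1} - 6q + 5 \leq 0$; it then deduces $k=1$ first (invoking $k \equiv 1 \pmod 4$ to replace $q^{k+1}$ by $q^6$ when $k>1$) and only afterwards factors $q^2-6q+5=(q-5)(q-1)$ to get $q=5$. You instead recast the hypothesis as the upper bound $I(q^k) \leq 6q/(6q-5)$, combine it with the monotonicity bound $I(q^k) \geq I(q) = (q+1)/q$ to get $q \leq 5$ and hence $q=5$ first, and then use strict monotonicity of $k \mapsto I(5^k)$ to force $k=1$. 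Your ordering is slightly more economical: it never uses the congruence $k \equiv 1 \pmod 4$ (only $k \geq 1$ and $q \geq 5$), whereas the paper's contradiction for $k>1$ leans on that congruence to pass to $q^6$. Both routes are elementary and of comparable length; the paper's version has the incidental advantage that the intermediate inequality $q^{k+1}-6q+5 \leq 0$ is reused verbatim in its proofs of Lemma 2.2 and Theorem 1.1.
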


\begin{proof}
Note that
$$I(n^2) = \frac{2}{I(q^k)} = \frac{2q^k(q-1)}{q^{k+1}-1} = 2-2\cdot\left(\frac{q^k-1}{q^{k+1}-1}\right).$$
If
$$I(n^2) \geq 2-\frac{5}{3q}$$
then we obtain
$$2-2\cdot\left(\frac{q^k-1}{q^{k+1}-1}\right) \geq 2-\frac{5}{3q}$$
$$\frac{5}{3q} \geq {2}\cdot\left(\frac{q^k-1}{q^{k+1}-1}\right)$$
$$5q^{k+1}-5 \geq 6q^{k+1}-6q$$
$$0 \geq q^{k+1}-6q+5,$$
which then implies that $k=1$.  (Otherwise, if $k>1$ we have
$$0 \geq q^{k+1}-6q+5 \geq {q^6}-6q+5$$
since $k \equiv 1 \pmod 4$, contradicting $q \geq 5$.)
Now, since $k=1$, we get
$$0 \geq {q^2}-6q+5=(q-5)(q-1)$$
which implies that $1 \leq q \leq 5$.  Together with $q \geq 5$, this means that $q=5$.  This concludes the proof.
\end{proof}

\begin{lemma}
\label{lem:6}
If $N=q^k n^2$ is an odd perfect number with Euler prime $q$, then $k=1$ implies
$$I(n^2) \leq 2-\frac{5}{3q}.$$
\end{lemma}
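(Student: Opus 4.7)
The plan is to reuse the closed-form expression for $I(n^2)$ established at the start of the proof of Lemma \ref{lem:5}, namely
$$I(n^2) = 2 - 2\cdot\left(\frac{q^k-1}{q^{k+1}-1}\right),$$
and simply specialize to $k=1$. Substituting $k=1$ collapses the fraction: $\frac{q-1}{q^2-1} = \frac{1}{q+1}$, so
$$I(n^2) = 2 - \frac{2}{q+1}.$$

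Once this is in hand, the inequality $I(n^2) \leq 2 - \frac{5}{3q}$ is equivalent, after subtracting $2$ from both sides and negating, to
$$\frac{2}{q+1} \geq \frac{5}{3q},$$
which, upon cross-multiplying (both denominators being positive), becomes $6q \geq 5(q+1)$, i.e.\ $q \geq 5$.

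Thus the whole claim reduces to the observation that the Euler prime $q$ satisfies $q \geq 5$. This is immediate from the definition: $q$ is a prime with $q \equiv 1 \pmod{4}$, which forces $q \geq 5$. No part of the argument is genuinely difficult; the only thing to be careful about is the direction of the inequality when passing from $I(n^2)$ to the bound, since one is subtracting from $2$. In particular, Lemma \ref{lem:6} is essentially the partial converse of Lemma \ref{lem:5}, and together with Lemma \ref{lem:5} it yields precisely the characterization stated in Theorem \ref{thm:1}.
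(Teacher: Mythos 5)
Your proof is correct. It rests on the identity $I(n^2) = 2/I(q^k)$ (which follows from $\sigma(q^k)\sigma(n^2) = 2q^kn^2$), specialized at $k=1$ to $I(n^2) = 2 - \frac{2}{q+1}$, and the desired bound then reduces by cross-multiplication to $q \geq 5$, which is forced by $q \equiv 1 \pmod 4$. The paper reaches the same endpoint but by a longer route: it first invokes Lemma \ref{lem:1} to produce an integer $d$ with $n^2 = d\cdot\sigma(n^2)/q$, computes $d = (q+1)/2$, rewrites $I(n^2) = 2 - \frac{1}{d}$, and then sandwiches $d$ between $\frac{q}{2}$ and $\frac{3q}{5}$ (the upper bound coming from $I(n^2) \geq \frac{5}{3}$, which is again just $q \geq 5$ in disguise). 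Mathematically the two arguments are the same computation --- $2 - \frac{1}{d} = 2 - \frac{2}{q+1}$ --- but your version strips out the dependence on Lemma \ref{lem:1} and the auxiliary divisor $d$, neither of which is actually needed for this inequality; what the paper's detour buys is the explicit divisibility interpretation of $d$, which it reuses in the concluding remarks. One small caveat: your final sentence overstates the relationship to Theorem \ref{thm:1} slightly --- Lemma \ref{lem:6} gives only the inequality under $k=1$, and the equality case in Theorem \ref{thm:1} still requires pinning down $q = 5$ separately --- but this does not affect the correctness of the proof of the lemma itself.
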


\begin{proof}
Suppose that $k=1$. By Lemma \ref{lem:1}, we have $\sigma(n^2)/q \mid n^2$.  This implies that there exists an (odd) integer $d$ such that
$$n^2 = {d}\cdot{\left(\frac{\sigma(n^2)}{q}\right)}.$$
Note that, from the equation $\sigma(N)=2N$, we obtain (upon setting $k=1$)
$$(q+1)\sigma(n^2)=\sigma(q)\sigma(n^2)=2qn^2$$
from which we get
$$d = \frac{n^2}{\sigma(n^2)/q} = \frac{q+1}{2}.$$
Notice that, when $k=1$, we can derive
$$\frac{5}{3} \leq I(n^2) = \frac{2}{I(q)} = \frac{2q}{q+1} < 2$$
so that we have
$$\frac{q}{2} < d = \frac{q}{I(n^2)} \leq \frac{3q}{5}.$$

Additionally, note that, when $k=1$, we have
$$I(n^2) = \frac{2}{I(q)} = \frac{2q}{q+1} = \frac{2q+2}{q+1} - \frac{2}{q+1} = 2 - \frac{1}{\frac{q+1}{2}} = 2 - \frac{1}{d}.$$

Consequently, we obtain
$$\frac{q}{2} < d \leq \frac{3q}{5}$$
$$\frac{5}{3q} \leq \frac{1}{d} < \frac{2}{q}$$
$$2 - \frac{2}{q} < 2 - \frac{1}{d} = I(n^2) \leq 2 - \frac{5}{3q},$$
and we are done.
\end{proof}

\section{The proof of Lemma \ref{lem:1}}
Let $N=q^k n^2$ be an odd perfect number with Euler prime $q$.

By Lemma \ref{lem:4}, we have
$$\frac{D(n^2)}{\sigma(q^{k-1})} = \frac{\sigma(N/q^k)}{q^k}.$$
This equation can be rewritten as
$$D(n^2) = \frac{\sigma(n^2)}{q}\cdot{I(q^{k-1})}.$$

Suppose that $\sigma(n^2)/q \mid n^2$.  Trivially, we know that $\sigma(n^2)/q \mid \sigma(n^2)$.  Thus, we have
$$\frac{\sigma(n^2)}{q} \mid \left(2n^2 - \sigma(n^2)\right) = D(n^2),$$
giving
$$\frac{\sigma(n^2)}{q} \mid \frac{\sigma(n^2)}{q}\cdot{I(q^{k-1})}.$$
This implies that $I(q^{k-1})$ is an integer.  Since $1 \leq I(q^{k-1}) < 5/4$, we obtain $k=1$.

Now assume that $k=1$.  We obtain
$$2n^2 - \sigma(n^2) = D(n^2) = \frac{\sigma(n^2)}{q}.$$
Again, since $\sigma(n^2)/q \mid \sigma(n^2)$, this implies
$$\frac{\sigma(n^2)}{q} \mid n^2$$
since $\sigma(n^2)/q$ is odd.

This concludes the proof of Lemma \ref{lem:1}.

\section{The proof of Lemma \ref{lem:2}}
Let $N=q^k n^2$ be an odd perfect number with Euler prime $q$.

Assume to the contrary that
$$I(n^2) > 2-\frac{5}{3q}.$$
Following the proof of Lemma \ref{lem:5}, we get
$$0 > q^{k+1} - 6q + 5.$$
Since $k \equiv 1 \pmod 4$, then $k \geq 1$, which implies that
$$0 > q^{k+1} - 6q + 5 \geq q^2 - 6q + 5 = (q - 5)(q - 1).$$
This then finally gives $1 < q < 5$, contradicting $q \geq 5$.

We therefore conclude that
$$I(n^2) \leq 2-\frac{5}{3q},$$
and this finishes the proof of Lemma \ref{lem:2}.

\section{The proof of Lemma \ref{lem:3}}
Let $N=q^k n^2$ be an odd perfect number with Euler prime $q$.

By Lemma \ref{lem:4}, we have
$$\frac{D(n^2)}{\sigma(q^{k-1})} = \frac{2n^2}{\sigma(q^k)}.$$
Multiplying throughout the last equation by $\sigma(q^{k-1})\sigma(q^k)$, we get
$$D(n^2)\sigma(q^k) = {2n^2}\sigma(q^{k-1}).$$
If $k=1$, then it is evident that $D(n^2) \mid 2n^2$, from which it follows that $D(n^2) \mid n^2$ since $D(n^2)$ is odd.

Now, assume that $D(n^2) \mid n^2$.  Then we have
$$\frac{\sigma(q^k)}{2\sigma(q^{k-1})} = \frac{n^2}{D(n^2)}$$
is an integer.  Since $\gcd\left(\sigma(q^{k-1}),\sigma(q^k)\right) = 1$, the previous equation then implies that $k=1$.

This concludes the proof of Lemma \ref{lem:3}.  In particular, we have shown that the Descartes-Frenicle-Sorli conjecture for odd perfect numbers $q^k n^2$ is true if and only if the non-Euler part $n^2$ is \emph{deficient-perfect} \cite{OEIS-A271816}.

\section{The proof of Theorem \ref{thm:1}}
Let $N=q^k n^2$ be an odd perfect number with Euler prime $q$.

We want to prove that the equation
$$I(n^2) = 2-\frac{5}{3q}$$
holds if and only if $k=1$ and $q=5$.

Suppose that
$$I(n^2) = 2-\frac{5}{3q}.$$

Following the proof of Lemma \ref{lem:4}, we get
$$0 = q^{k+1} - 6q + 5.$$

Assume to the contrary that $k>1$. Since $k \equiv 1 \pmod 4$, we obtain
$$0 = q^{k+1} - 6q + 5 \geq q^6 - 6q + 5.$$
This contradicts $q \geq 5$.  Thus, we have established that $k=1$.

Substituting $k=1$ into $0 = q^{k+1} - 6q + 5$, we have
$$0 = q^2 - 6q + 5 = (q - 5)(q - 1)$$
which implies that $q=5$ since $q \geq 5$.  This takes care of one direction of Theorem \ref{thm:1}.

For the other direction, assume that $k=1$ and $q=5$.  We want to show that
$$I(n^2) = 2-\frac{5}{3q}.$$
Note that, when $k=1$ and $q=5$, we obtain
$$I(n^2) = \frac{2}{I(q)} = \frac{2q}{q+1} = \frac{5}{3}.$$
We also get
$$2-\frac{5}{3q}=2-\frac{1}{3}=\frac{5}{3}$$
so that we have
$$I(n^2) = 2-\frac{5}{3q},$$
as desired.

\section{Concluding Remarks}
We end with some remarks related to the biconditional
$$k=1 \Longleftrightarrow \bigg(D(n^2) \mid n^2\bigg).$$

Suppose that $k=1$. By Lemma \ref{lem:3} and Lemma \ref{lem:4}, we obtain
$$D(n^2) = \gcd\left(n^2, \sigma(n^2)\right) = \frac{\sigma(n^2)}{q} = \frac{n^2}{(q+1)/2}.$$
Multiplying throughout the equations by $q(q+1)/2$, we have
$${D(n^2)}\cdot\bigg({\frac{q(q+1)}{2}}\bigg) = \bigg(\frac{q+1}{2}\bigg)\cdot{\sigma(n^2)} = qn^2 = N.$$

In fact, as shown by Slowak \cite{Slowak}, every odd perfect number $N$ has the form
$$N = {q^k}\cdot\frac{\sigma(q^k)}{2}\cdot{d}$$
for some $d > 1$.  We give a quick proof of this fact here.

By Lemma \ref{lem:4}, we obtain
$$\frac{D(n^2)}{\sigma(q^{k-1})} = \gcd\left(n^2, \sigma(n^2)\right) = \frac{\sigma(n^2)}{q^k} = \frac{n^2}{\sigma(q^k)/2}.$$
Multiplying throughout the equations by ${q^k}\sigma(q^k)/2$, we get
$$\frac{{q^k}\sigma(q^k)}{2}\cdot\frac{D(n^2)}{\sigma(q^{k-1})} = \frac{{q^k}\sigma(q^k)}{2}\cdot\gcd\left(n^2, \sigma(n^2)\right) = {q^k}{n^2} = N,$$
where
$$d = \frac{D(n^2)}{\sigma(q^{k-1})} = \gcd(n^2, \sigma(n^2)) > 1$$
by Remark \ref{rem:1}.

\section{Acknowledgments}
The author thanks Keneth Adrian P. Dagal for helpful conversations that led to most of the results presented in this paper.  The author is also indebted to the anonymous referee(s) whose valuable feedback improved the overall presentation and style of this manuscript.

\bibliographystyle{amsplain}

\begin{thebibliography}{14}
\bibitem{Beasley}
B.~D.~Beasley, Euler and the ongoing search for odd perfect numbers, ACMS 19th Biennial Conference Proceedings, Bethel University, May 29 to Jun. 1, 2013.
\bibitem{Broughan-Delbourgo-Zhou}
K.~A.~Broughan, D.~Delbourgo, Q.~Zhou, Improving the Chen and Chen result for odd perfect numbers, \textit{Integers}, {\bf 13} (2013), \#A39.
\bibitem{Brown}
P.~A.~Brown, A partial proof of a conjecture of Dris, preprint (2016), \url{http://arxiv.org/abs/1602.01591}.
\bibitem{Chen-Chen1}
F-J.~Chen and Y-G.~Chen, On odd perfect numbers, \textit{Bull. Aust. Math. Soc.}, {\bf 86} (2012), 510-514.
\bibitem{Chen-Chen2}
F-J.~Chen and Y-G.~Chen, On the index of an odd perfect number, \textit{Colloq. Math.}, {\bf 136} (2014), 41-49.
\bibitem{Dris1}
J.~A.~B.~Dris, Solving the Odd Perfect Number Problem: Some Old and New Approaches, M.~Sc.~Math thesis, De La Salle University, Manila, Philippines, 2008.
\bibitem{Dris2}
J.~A.~B.~Dris, The abundancy index of divisors of odd perfect numbers, \textit{J. Integer Seq.}, {\bf 15} (2012), Article 12.4.4.
\bibitem{Dris-Luca}
J.~A.~B.~Dris and F.~Luca, A note on odd perfect numbers, \textit{Fibonacci Quart.}, {\bf 54} (2016), no. 4, 291-295. 
\bibitem{GIMPS}
G.~Woltman and S.~Kurowski, The Great Internet Mersenne Prime Search, \url{http://www.mersenne.org/primes/}. Last viewed: September 9, 2016.
\bibitem{Holdener}
J.~A.~Holdener, Conditions equivalent to the existence of odd perfect numbers, \textit{Math. Mag.}, {\bf 79} (2006), 389-391.
\bibitem{OEIS-A033879}
N.~J.~A.~Sloane, OEIS sequence A033879 - Deficiency of $n$, or $2n - \sigma(n)$, \url{http://oeis.org/A033879}.
\bibitem{OEIS-A271816}
C.~F.~E.~Adajar, OEIS sequence A271816 - Deficient-perfect numbers: Deficient numbers $n$ such that $n/(2n-\sigma(n))$ is an integer, \url{http://oeis.org/A271816}.
\bibitem{Slowak}
J.~Slowak, Odd perfect numbers, \textit{Math. Slovaca}, {\bf 49} (1999), 253-254.
\bibitem{Sorli}
R.~M.~Sorli, Algorithms in the Study of Multiperfect and Odd Perfect Numbers, Ph.~D. Thesis, University of Technology, Sydney, 2003, \url{http://hdl.handle.net/10453/20034}.
\end{thebibliography}

\end{document}